\numberwithin{equation}{section}
\numberwithin{figure}{section}
\numberwithin{equation}{section}
\theoremstyle{plain}
\newtheorem{theorem}{}[section]
\newtheorem{lemma}[theorem]{}
\theoremstyle{definition}
\newtheorem{definition}[theorem]{}
 \DeclareMathOperator{\M}{M}
 \DeclareMathOperator{\pn}{p_n}
 \DeclareMathOperator{\n}{n}
 \DeclareMathOperator{\recip}{r}
 \DeclareMathOperator{\ad}{{\mathbf{ad}}}
 \DeclareMathOperator{\phiop}{\phi}
 \DeclareMathOperator{\GCD}{GCD}
 \newcommand{\Q}{{\mathbb{Q}}}
 \newcommand{\Z}{{\mathbb{Z}}}
\begin{document}
%%%%%%%%%%%%%%%%%%%%%%%%%%%%%%%%%%%%%%%%%%%%%%%%%%%%%%%%%%%%%%%%%%%%%  FRONT MATTER                                                 %%%%%%%%%%%%%%%%%%%%%%%%%%%%%%%%%%%%%%%%%%%%%%%%%%%%%%%%%%%%%%%%%%%%%

\title[Roots with common tails]{Roots with common tails}

\maketitle
%% First author (Note: The order of the items here is important!)

\author{{[}}

A.Hobby{]}{Alexandra Hobby}

\email{Alexandra.F.Hobby@gmail.com}

%% Second author (Note: The order of the items here is important!)

\author{{[}}

D. Hobby{]}{David Hobby}

\email{hobbyd@newpaltz.edu}

\address{Mathematics, SUNY New Paltz\\
1 Hawk Drive\\
New Paltz, NY, 12561\\
USA}

%% Thanks (Optional)

\thanks{This research was supported by a SURE grant from SUNY New Paltz.}

%% Dedication (Optional)

%% AMS subject classification; see http://www.ams.org/msc%% Only one Primary. Possibly several Secondary.\subjclass{[}2010{]}Primary:
11A55; Secondary: 12F10.

%% Keywords and phrases

\keywords{continued fractions, cubic irrationalities, common tails, equivalent}
\begin{abstract}
Some cubic polynomials over the integers have three distinct real
roots with continued fractions that all have the same common tail.
We characterize the polynomials for which this happens, and then investigate
the situation for other polynomials of low degree. 
\end{abstract}

%%%%%%%%%%%%%%%%%%%%%%%%%%%%%%%%%%%%%%%%%%%%%%%%%%%%%%%%%%%%%%%%%%%%%  MAIN MATTER                                                  %%%%%%%%%%%%%%%%%%%%%%%%%%%%%%%%%%%%%%%%%%%%%%%%%%%%%%%%%%%%%%%%%%%%%

\section{Introduction}

\label{S:intro}

Around 35 years ago, the second author used a computer to calculate
the roots of cubic polynomials and their continued fractions, when
he noticed an interesting phenomenon. Sometimes the three roots would
have continued fractions that all agreed after a certain point. For
example, the three roots of $x^{3}+6x^{2}+9x+1$ are approximately
$-3.5320888$, $-2.3472963$, and $-.1206147$, and the continued
fractions corresponding to these roots are $[-4;2,7,3,2,3,1,1,\dots]$,
$[-3,1,1,1,7,3,2,3,1,1,\dots]$, $[-1;1,7,3,2,3,1,1,\dots]$. It is
natural to say that the three roots { \em have common tails}. We
have found little prior mention of this phenomenon in the literature.

For background on continued fractions, the reader may turn to \cite{Olds}
or many other introductory texts. Also, \cite{Perron} is a very thorough
text, and contains our Theorem \ref{ad theorem}. (As do \cite{Hua}
and \cite{Hardy}.) For background on field extensions and Galois
theory, many advanced undergraduate texts in abstract algebra will
be fine. We will work with irreducible polynomials over $\Q$ or equivalently
over $\Z$, which have the form $ax^{3}+bx^{2}+cx+d$, where $a,b,c,d\in\Z$.
These polynomials will have distinct real roots arbitrarily called
$r_{1}$, $r_{2}$, and so on.

In general, the splitting field of an irreducible cubic polynomial
over the rationals has degree 6. But if the three roots have common
tails, adjoining any one root to $\Q$ also adds the common tail,
which in turn adds the other two roots. Thus the degree of the splitting
field must be 3. Using the discriminant $\Delta=a^{4}(r_{1}-r_{2})^{2}(r_{2}-r_{3})^{2}(r_{3}-r_{1})^{2}$,
we have that the splitting field has degree 3 if and only if $\Delta$
is a perfect square in $\Z$. In this case, the roots $r_{1}$, $r_{2}$
and $r_{3}$ are real and distinct. Using that $\Delta$ is $b^{2}c^{2}-4ac^{3}-4b^{3}d-27a^{2}d^{2}+18abcd$,
it is easy to go through various polynomials searching for those that
are irreducible and have splitting fields of degree 3.

The initial search yielded a small number of cubic polynomials with
splitting fields of degree 3, all of which had roots with common tails.
The natural conjecture was that the roots had common tails whenever
the splitting field had degree 3. Aside from mentioning the problem
to various number theorists, this is where the matter sat. Meanwhile,
computers and software had become much more powerful. So when we started
researching the topic in earnest in 2014, the first author used Mathematica
(\cite{Mathematica}) to check a large number of polynomials, and
promptly reported that the conjecture was false.

Given two irrational numbers $s$ and $t$, we view their continued
fractions as infinite sequences $[s_{0};s_{1},s_{2},s_{3},\dots]$
and $[t_{0};t_{1},t_{2},t_{3},\dots]$. Then $s$ and $t$ have {\em
common tails} if there exist $m$ and $n$ so that $s_{m+k}=t_{n+k}$
for all $k\geq0$. We write $s\approx t$ to mean than $s$ and $t$
have common tails, and note that $\approx$ is an equivalence relation.
In number theory, numbers with common tails are referred to as {\em
equivalent}. When there is no danger of confusion, we may also use
this term. (Rational numbers have terminating continued fractions,
and it would be natural to extend our definition by saying that all
rationals had common tails. While much of our theory would work in
this broader domain, we will always be working with irrational numbers.)

We restate some well-known facts in the following lemma. \begin{lemma}
\label{common tail lemma} If $r$ is irrational, then 
\begin{enumerate}
\item $r\approx r+n$ for any integer $n$, 
\item $r\approx-r$, and 
\item $r\approx1/r$ 
\end{enumerate}
\end{lemma}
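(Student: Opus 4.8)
The plan is to check each item directly from the construction of the continued fraction expansion. For irrational $r$ one iterates the complete-quotient recursion $r = a_0 + 1/r_1$, with $a_0 = \lfloor r\rfloor$ and $r_1 > 1$, to obtain $r = [a_0; a_1, a_2, \dots]$ where $a_i \geq 1$ for all $i \geq 1$; in each part I would write down the expansion of the transformed number and observe that it shares a tail with $[a_0; a_1, a_2, \dots]$.

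Parts (1) and (3) are routine. For (1), since $n \in \Z$ we have $\lfloor r+n\rfloor = \lfloor r\rfloor + n$ while the fractional part of $r$ is unchanged, so $r+n = [a_0+n;\, a_1, a_2, \dots]$; the two expansions agree from index $1$ onward, whence $r \approx r+n$. For (3), if $a_0 \geq 1$ then $r > 1$ and $0 < 1/r < 1$ with $1/r = [0;\, a_0, a_1, a_2, \dots]$, a mere shift of the expansion of $r$; if $a_0 = 0$ then $1/r = [a_1;\, a_2, a_3, \dots]$; and if $a_0 \leq -1$ one reduces to a positive argument via $1/r = -\bigl(1/(-r)\bigr)$, using part (2) and transitivity of $\approx$ (or a parallel computation). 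In every case the two sequences coincide past some index, so $r \approx 1/r$.

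I expect part (2) to be the main obstacle, since negation genuinely disturbs the leading partial quotients, and the disturbance depends on whether $a_1 = 1$. The tool I would use is the identity, valid for irrational $r = [a_0; a_1, a_2, \dots]$ with $a_1 \geq 2$,
\[
 -r \;=\; [\,-a_0 - 1;\ 1,\ a_1 - 1,\ a_2,\ a_3,\ \dots\,],
\]
which reduces to the one-line check $1 - (a_1+\beta)^{-1} = \bigl(1 + ((a_1-1)+\beta)^{-1}\bigr)^{-1}$, where $\beta = [0; a_2, a_3, \dots] \in (0,1)$ is the relevant complete quotient. When $a_1 = 1$ the entry $a_1 - 1$ is $0$, which is not a legal partial quotient, but it collapses through $[\dots, c, 0, d, \dots] = [\dots, c+d, \dots]$, yielding here $-r = [\,-a_0-1;\ 1+a_2,\ a_3,\ a_4,\ \dots\,]$. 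In both cases a tail of the expansion of $r$ reappears verbatim in the expansion of $-r$, so $r \approx -r$. The only genuine care required is the bookkeeping of this case split and of the degenerate $0$-entry; with that dispatched, all three statements follow.
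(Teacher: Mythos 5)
Your proposal is correct and follows essentially the same route as the paper: shift the leading quotient for $r+n$, use the identities $-r=[-a_0-1;1,a_1-1,a_2,\dots]$ (with the collapse $[\dots,c,0,d,\dots]=[\dots,c+d,\dots]$ when $a_1=1$) for negation, and a shift plus reduction to the positive case for $1/r$. Your algebraic verification of the negation identity is a welcome extra detail the paper omits.
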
 \begin{proof} Let $r$ have continued fraction $[r_{0};r_{1},r_{2},r_{3},\dots]$.
Then $r+n$ has continued fraction $[r_{0}+n;r_{1},r_{2},r_{3},\dots]$,
showing $r\approx r+n$.

The continued fraction of $-r$ is $[-r_{0}-1;1,r_{1}-1,r_{2},r_{3},\dots]$
if $r_{1}>1$, and is $[-r_{0}-1;r_{2}+1,r_{3},\dots]$ if $r_{1}=1$.
Either way, $r\approx-r$.

Since $r\approx-r$, we need only prove $r\approx1/r$ when $r$ is
positive, so assume that is the case. If $r_{0}=0$, then the continued
fraction of $1/r$ is $[r_{1};r_{2},r_{3},\dots]$. If $r_{0}>0$,
the continued fraction of $1/r$ is $[0;r_{0},r_{1},r_{2},r_{3},\dots]$.
\end{proof}

%%%%%%%%%%%%%%%%%%%%%%%%%%%%%%%%%%%%%%%%%%%%%%%

\section{Linear Fractional Transformations}

\label{S:Linear}

A {\em linear fractional transformation} is a map that takes $z$
to $(\alpha z+\beta)/(\gamma z+\delta)$. (These are also sometimes
called homographies, or Möbius transformations.) While these maps
are used in complex analysis (see \cite{Zill}) and other fields,
we will not need any outside results in this paper. Observe that the
composition of two linear fractional transformations is again a linear
fractional transformation. If $f(z)$ is $(\alpha z+\beta)/(\gamma z+\delta)$,
it is convenient to consider $f$ to be the class of all matrices
that correspond to choices of $\alpha$, $\beta$, $\gamma$ and $\delta$
that give the function $f$. Thus we define 
\[
\M(f)=\left\{ \lambda\left(\begin{array}{cc}
\alpha & \beta\\
\gamma & \delta
\end{array}\right):\lambda\neq0\right\} 
\]
We say that elements of $\M(f)$ are {\em matrices} of $f$, or
alternatively, {\em matrices} of $\alpha$, $\beta$, $\gamma$
and $\delta$. Where $v=f(u)$ and $u$ is understood, we will also
refer to a {\em matrix} of $v$.

We will mostly be concerned with linear fractional transformations
which are defined and not constant, these correspond to invertible
matrices. It is easily verified for the composition of the linear
fractional transformations $f$ and $g$, that $\M(f\circ g)$ is
the product $\M(f)\M(g)=\{AB:A\in\M(f),B\in\M(g)\}$.

Letting $ax^{3}+bx^{2}+cx+d$ be an irreducible polynomial over $\Z$
with splitting field of degree 3, we have that any element of $\Q(r_{1})$
can be uniquely written as $sr_{1}^{2}+tr_{1}+u$ for some $s,t,u\in\Q$.
To rewrite this element as a linear fractional transformation of $r_{1}$,
it is enough to deal with the case where $s\ne0$. We let $\gamma=a/s$,
and $\delta=b/s-at/s^{2}$. Then $(\gamma r_{1}+\delta)(sr_{1}^{2}+tr_{1}+u)=ar_{1}^{3}+br_{1}^{2}+(au/s+bt/s-at^{2}/s^{2})r_{1}+(b/s-at/s^{2})=\alpha r_{1}+\beta$,
where $\alpha=au/s+bt/s-at^{2}/s^{2}-c$ and $\beta=b/s-at/s^{2}-d$,
the last step since $ar_{1}^{3}+br_{1}^{2}+cr_{1}+d=0$. Thus $sr_{1}^{2}+tr_{1}+u=(\alpha r_{1}+\beta)/(\gamma r_{1}+\delta)$.
Multiplying top and bottom by a rational number, we can put any such
linear fractional transformation into a unique {\em standard form}
where $\alpha$, $\beta$, $\gamma$ and $\delta$ are integers that
do not all have a common factor and where either $\alpha$ is positive
or $\alpha$ is zero and $\beta$ is non-negative. We will also call
the matrix with entries these $\alpha$, $\beta$, $\gamma$ and $\delta$
the {\em standard matrix} of the linear fractional transformation.

Thus we may write $r_{2}=(\alpha r_{1}+\beta)/(\gamma r_{1}+\delta)$.
We let $\phi$ be the Galois automorphism of $\Q(r_{1})$ that fixes
$\Q$ and has $\phiop(r_{1})=r_{2}$, $\phiop(r_{2})=r_{3}$ and $\phiop(r_{3})=r_{1}$.
Applying $\phi$ repeatedly to $r_{2}=(\alpha r_{1}+\beta)/(\gamma r_{1}+\delta)$,
we obtain $r_{3}=(\alpha r_{2}+\beta)/(\gamma r_{2}+\delta)$ and
then $r_{1}=(\alpha r_{3}+\beta)/(\gamma r_{3}+\delta)$. This implies
that applying $f(z)=(\alpha z+\beta)/(\gamma z+\delta)$ three times
takes $r_{1}$ back to $r_{1}$. Thus the cube of any matrix in $\M(f)$
is a non-zero multiple of the identity matrix.

Note that since the ordering of the roots is arbitrary, that we may
just as well be dealing with $\phi^{-1}$ as with $\phi$. Doing so
gives us the inverse of the linear fractional transformation $f$,
which has matrices that are non-zero multiples of the inverse of the
matrix with entries $\alpha$, $\beta$, $\gamma$ and $\delta$.
Modulo this, the linear fractional transformation is uniquely determined
by our particular polynomial.

%%%%%%%%%%%%%%%%%%%%%%%%%%%%%%%%%%%%%%%%%%%%%%%%%%%%%%%%%%%%%%%%%%%

\section{Main Results}

\label{S:main results}

In view of Lemma \ref{common tail lemma}, we make the following definition.

\begin{definition}\label{basic operations} The {\em basic operations}
on real numbers are: 
\begin{enumerate}
\item ``plus $n$'', where $\pn(y)$ in $y+n$, 
\item ``negation'', where $\n(y)$ is $-y$, and 
\item ``reciprocal'', where $\recip(y)$ is $1/y$. 
\end{enumerate}
\end{definition}

Note that $n$ and $r$ are their own inverses, and that the inverse
of $p_{n}$ is $p_{-n}$. While $n$ turns out to be redundant, it
is convenient to include it as a basic operation.

\begin{theorem}\label{chain of basic ops theorem} For any irrational
numbers $s$ and $u$, $s\approx u$ iff $u$ can be obtained from
$s$ by a composition of basic operations. \end{theorem}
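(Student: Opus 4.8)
The proof splits into two implications, and the interesting direction is the forward one.

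The plan is to first dispatch the easy direction. If $u$ is obtained from $s$ by a composition of basic operations, then by Lemma~\ref{common tail lemma} each individual basic operation preserves $\approx$ (parts (1), (2), (3) give exactly $s \approx p_n(s)$, $s \approx \n(s)$, $s \approx \recip(s)$), and since $\approx$ is an equivalence relation, in particular transitive, any finite composition preserves it. Hence $s \approx u$.

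For the forward direction, suppose $s \approx u$, so their continued fractions $[s_0; s_1, s_2, \dots]$ and $[u_0; u_1, u_2, \dots]$ satisfy $s_{m+k} = u_{n+k}$ for all $k \ge 0$. The key observation is that passing from a number to the ``tail'' of its continued fraction at position $j$ — that is, the number $\sigma_j = [s_j; s_{j+1}, s_{j+2}, \dots]$ — is achievable by basic operations: we have $\sigma_0 = s$, and $\sigma_{j} = s_j + 1/\sigma_{j+1}$, so $\sigma_{j+1} = \recip(\pn[-s_j](\sigma_j)) = 1/(\sigma_j - s_j)$, which is a composition of a ``plus $-s_j$'' and a ``reciprocal.'' Iterating, $\sigma_m$ is obtained from $s$ by a composition of basic operations, and likewise $\tau_n := [u_n; u_{n+1}, \dots]$ is obtained from $u$. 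But the common-tail hypothesis says precisely $\sigma_m = \tau_n$ as real numbers. Now invert: each basic operation has an inverse that is again a basic operation (noted in the text: $\n$ and $\recip$ are involutions, and $p_n^{-1} = p_{-n}$), so $u$ is obtained from $\tau_n = \sigma_m$ by a composition of basic operations, and $\sigma_m$ is obtained from $s$ by a composition of basic operations. Composing, $u$ is obtained from $s$ by a composition of basic operations.

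I would write this up carefully around one subtle point, which I expect to be the only real obstacle: the continued fraction algorithm as usually stated produces $s_j \ge 1$ for $j \ge 1$, so $\sigma_j > 1$ and $\sigma_j - s_j = \sigma_j - \lfloor \sigma_j \rfloor \in (0,1)$ is safely nonzero and the reciprocal is defined — but one must make sure the bookkeeping is right at $j = 0$ (where $s_0$ can be any integer and $s$ itself could in principle be negative or lie in $(0,1)$), and more importantly one must be careful that the numbers stay irrational throughout so that every continued fraction is genuinely infinite and Lemma~\ref{common tail lemma} applies at each step. Since $s$ is irrational and each $\sigma_{j+1} = 1/(\sigma_j - s_j)$ is a nonconstant rational function with rational coefficients applied to an irrational number, irrationality is preserved, so this is routine but worth a sentence. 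The clean way to present the whole argument is: define a relation $\sim$ on irrationals by ``$u$ is reachable from $s$ by basic operations,'' observe it is an equivalence relation (symmetry from invertibility of basic operations), observe $s \sim \sigma_m$ and $u \sim \tau_n$ from the tail recursion, and observe $\sigma_m = \tau_n$ from the hypothesis; then $s \sim u$. Combined with the easy direction, this gives the theorem.
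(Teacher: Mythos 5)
Your proposal is correct and follows essentially the same route as the paper: both directions are handled identically, with the forward direction peeling off partial quotients via $\sigma_{j+1}=\recip(p_{-s_j}(\sigma_j))$ to reach the common tail and then running the inverted chain for $u$. The extra care you flag about irrationality and well-definedness of the reciprocals is a reasonable refinement but does not change the argument.
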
 \begin{proof}
Let $s$ and $u$ be given. If $u$ can be obtained from $s$ by basic
operations, Lemma \ref{common tail lemma} implies $s\approx u$.
So assume $s\approx u$. We have that $s$ and $u$ have continued
fractions $[s_{0};s_{1},s_{2},\dots]$ and $[u_{0};u_{1},u_{2},\dots]$
where for some $m$ and $n$, $s_{m}=u_{n}$, $s_{m+1}=u_{n+1}$,
and so on. Let $t$ be the number represented by this common tail,
so $t$ has continued fraction $[s_{m};s_{m+1},s_{m+2},\dots]$. Then
$t$ is equal to $r(p_{-s_{m-1}}(\dots r(p_{-s_{1}}(r(p_{-s_{0}}(s))))\dots))$.
Similarly we have $t=r(-p_{u_{n-1}}(\dots r(p_{-u_{1}}(r(p_{-u_{0}}(u))))\dots))$,
so $u=(p_{u_{0}}\circ\dots p_{u_{n-1}}\circ r)\circ(r\circ p_{-s_{m-1}}\circ\dots p_{-s_{0}})(s)$.
\end{proof}

We define the operation $\ad$ on $2\times2$ matrices by letting
$\ad$ be the absolute value of the determinant, and also write 
\[
\ad(\alpha,\beta,\gamma,\delta)=\ad\left(\begin{array}{cc}
\alpha & \beta\\
\gamma & \delta
\end{array}\right)
\]

The key fact is that basic operations do not change the value of $\ad(\alpha,\beta,\gamma,\delta)$.
That is, suppose that $v=(\alpha u+\beta)/(\gamma u+\delta)$, and
let $M$ be the matrix of $v$. Then $\pn(v)=((\alpha+n\gamma)u+(\beta+n\delta))/(\gamma u+\delta)$,
and a matrix for $\pn(v)$ is obtained from $M$ by adding $n$ times
the bottom row of $M$ to the top row of $M$. The new matrix has
the same determinant as $M$ does. Similarly, $r$ corresponds to
interchanging the rows of $M$, and $n$ multiplies a row of $M$
by $-1$. Neither of these change the absolute value of the determinant.
(We could also have represented $p_{n}$, $r$ and $n$ as linear
fractional transformations, and noted that they had matrices with
determinants of $1$, $-1$ and $-1$, respectively.)

\begin{theorem}\label{ad theorem} Let $s$ and $t$ be irrational.
Then $s\approx t$ iff there are integers $\alpha$, $\beta$, $\gamma$
and $\delta$ where $t=(\alpha s+\beta)/(\gamma s+\delta)$ and $\ad(\alpha,\beta,\gamma,\delta)=1$.
\end{theorem}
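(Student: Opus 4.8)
The plan is to derive both implications from Theorem~\ref{chain of basic ops theorem}, which converts ``$s\approx t$'' into ``$t$ is obtained from $s$ by a composition of basic operations,'' together with the matrices of the basic operations recorded just above the statement: $p_n$ has matrix $\left(\begin{smallmatrix}1&n\\0&1\end{smallmatrix}\right)$, $r$ has matrix $\left(\begin{smallmatrix}0&1\\1&0\end{smallmatrix}\right)$, and $n$ has matrix $\left(\begin{smallmatrix}-1&0\\0&1\end{smallmatrix}\right)$, with determinants $1$, $-1$, and $-1$. For the ``only if'' direction, if $s\approx t$ then $t=f(s)$ where $f$ is a composition of basic operations; since $\M(f\circ g)=\M(f)\M(g)$, the class $\M(f)$ contains an integer matrix that is a product of the matrices above, hence has determinant $\pm1$. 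Reading off its entries as $\alpha,\beta,\gamma,\delta$ gives $t=(\alpha s+\beta)/(\gamma s+\delta)$ with $\ad(\alpha,\beta,\gamma,\delta)=1$.

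For the ``if'' direction, suppose $t=(\alpha s+\beta)/(\gamma s+\delta)$ with $\alpha,\beta,\gamma,\delta\in\Z$ and $|\alpha\delta-\beta\gamma|=1$, and let $f$ be the corresponding linear fractional transformation, so $t=f(s)$. By Theorem~\ref{chain of basic ops theorem} it suffices to show $f$ is a composition of basic operations; since $\M(f\circ g)=\M(f)\M(g)$ and each basic operation has a basic operation as inverse ($p_n^{-1}=p_{-n}$, $r^{-1}=r$, $n^{-1}=n$), it is enough to reduce $M=\left(\begin{smallmatrix}\alpha&\beta\\\gamma&\delta\end{smallmatrix}\right)$ to the identity by successively left-multiplying by matrices of basic operations. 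I would do this with the Euclidean algorithm on the first column of $M$: left-multiplying by $\M(p_n)$ adds $n$ times the bottom row to the top row and left-multiplying by $\M(r)$ interchanges the rows, so when $\gamma\neq0$ I can choose $n$ with $|\alpha+n\gamma|<|\gamma|$ and then swap, replacing the first-column pair $(\alpha,\gamma)$ by $(\gamma,\alpha+n\gamma)$ and strictly decreasing the absolute value of its second entry. Since $\gcd(\alpha,\gamma)$ divides $\alpha$ and $\gamma$, hence divides $\alpha\delta-\beta\gamma=\pm1$, we have $\gcd(\alpha,\gamma)=1$, so the process terminates at a matrix with bottom-left entry $0$ and top-left entry $\pm1$; its determinant then forces the form $\left(\begin{smallmatrix}\pm1&m\\0&\pm1\end{smallmatrix}\right)$. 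Because $\M(n)$ also contains $\left(\begin{smallmatrix}1&0\\0&-1\end{smallmatrix}\right)$ (the scalar multiple by $-1$), left-multiplying by suitable elements of $\M(n)$ negates whichever rows are needed to reach $\left(\begin{smallmatrix}1&m\\0&1\end{smallmatrix}\right)$, and a final left-multiplication by $\M(p_{-m})$ yields the identity. Undoing these left multiplications writes $M$ as a product of matrices of basic operations, so $f$ is a composition of basic operations and $s\approx t$.

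The ``only if'' direction is immediate; the substance is the ``if'' direction, and within it the Euclidean reduction of an integer matrix of determinant $\pm1$ to the identity using only the three permitted kinds of matrices. I do not expect any step there to be deep, but the bookkeeping is where care is needed: keeping straight that we are performing row operations (left multiplication) while Theorem~\ref{chain of basic ops theorem} is phrased in terms of composition order, and handling the signs with the two representatives in $\M(n)$.
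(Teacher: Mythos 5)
Your proposal is correct and follows essentially the same route as the paper: the forward direction tracks the determinant through the matrices of the basic operations, and the backward direction reduces the matrix to the identity via the Euclidean algorithm on the left column (using $\gcd(\alpha,\gamma)=1$), fixing signs with $n$ and finishing with a $p_{-m}$ step. Your extra care about composition order and the two sign representatives in $\M(n)$ matches what the paper does implicitly.
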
 \begin{proof} Suppose that $s\approx t$. By Theorem
\ref{chain of basic ops theorem}, there is a sequence $s=u_{0},u_{1},u_{2},\dots u_{n}=t$,
where for $0\leq i<n$, $u_{i+1}$ is obtained by performing a basic
operation to $u_{i}$. We may write $u_{0}$ as $(1s+0)/(0s+1)$,
so a matrix of $u_{0}$ in terms of $s$ is the identity matrix. Since
basic operations do not change the absolute value of the determinant,
there are integers $\alpha$, $\beta$, $\gamma$ and $\delta$ where
$t=u_{n}=(\alpha s+\beta)/(\gamma s+\delta)$ and $\ad(\alpha,\beta,\gamma,\delta)=1$.

Now suppose that $t=(\alpha s+\beta)/(\gamma s+\delta)$ and $\ad(\alpha,\beta,\gamma,\delta)=1$.
We will row-reduce the matrix $M$ with entries $\alpha$, $\beta$,
$\gamma$ and $\delta$, using steps corresponding to the basic operations.
Note that $\GCD(\alpha,\gamma)$ divides $\ad(\alpha,\beta,\gamma,\delta)$,
so $\GCD(\alpha,\gamma)=1$. We can perform the Euclidean Algorithm
on the left column of $M$, reducing $M$ to a matrix $N$ with entries
$1$, $\beta'$, $0$ and $\delta'$. Since the absolute value of
the determinant of $N$ is $1$, $\delta'$ must be $1$ or $-1$.
If it is $-1$, apply the operation $n$ to negate the bottom row
of $N$. And then we apply $p_{-\beta'}$ to subtract $\beta'$ times
the bottom row from the top row, giving the identity matrix, which
is a matrix for $s$. This process gives us a chain of basic operations
that converts $t$ to $s$. By Theorem \ref{chain of basic ops theorem},
$s\approx t$. \end{proof}

Here is an example to illustrate the second half of the above proof.
Let $t$ be $(1s+3)/(2s+5)$ where $\ad(1,3,2,5)=|-1|=1$. Then the
row-reduction would be 
\[
%
%begin{array}{cc}
%7
%2
%{array}
%\stackrel{\rightarrow}{p_{-3}}
\left(\begin{array}{cc}
1 & 3\\
2 & 5
\end{array}\right)\stackrel{\longrightarrow}{r}\left(\begin{array}{cc}
2 & 5\\
1 & 3
\end{array}\right)\stackrel{\longrightarrow}{p_{-2}}\left(\begin{array}{cc}
0 & -1\\
1 & 3
\end{array}\right)\stackrel{\longrightarrow}{r}\left(\begin{array}{cc}
1 & 3\\
0 & -1
\end{array}\right)\stackrel{\longrightarrow}{n}\left(\begin{array}{cc}
1 & 3\\
0 & 1
\end{array}\right)\stackrel{\longrightarrow}{p_{-3}}\left(\begin{array}{cc}
1 & 0\\
0 & 1
\end{array}\right)
\]

\begin{comment}
An alternative way to prove the above would have been to use that
the group of all $2\times2$ matrices over $\Z$ with determinants
of $\pm1$ is $GL(2,\Z)$, and that this group is generated by the
elements ???. (See \cite{GL2}.) We gave a direct proof because we
will use the argument again later in this paper.
\end{comment}

The above theorem has been known for a long time, and may have started
out as ``folklore''. It appears in \cite{Serret1} and \cite{Serret2}
by J. A. Serret, and is used by Hurwitz in \cite{Hurwitz} which is
on continued fractions with a generalized arithmetic pattern.

\begin{theorem}\label{characterization theorem} Given a cubic polynomial
over $\Q$ with a splitting field of degree $3$, its three roots
have common tails iff it has roots $r_{1}$ and $r_{2}$ where $r_{2}=(\alpha r_{1}+\beta)/(\gamma r_{1}+\delta)$
and $\ad(\alpha,\beta,\gamma,\delta)=1$. \end{theorem}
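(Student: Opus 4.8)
The plan is to deduce this from Theorem \ref{ad theorem} together with the observation, already recorded in Section \ref{S:Linear}, that the Galois automorphism $\phi$ carries a linear fractional relation among the roots to another such relation with the \emph{same} matrix. First I would note that, because the splitting field has degree $3$, each $\Q(r_i)$ equals the whole splitting field, so $\Q(r_1)/\Q$ is Galois with cyclic group of order $3$ acting transitively on the roots; in particular $r_2\in\Q(r_1)$ and, $r_2$ being irrational, $r_2$ can be written in standard form $(\alpha r_1+\beta)/(\gamma r_1+\delta)$ with $\alpha,\beta,\gamma,\delta\in\Z$. Hence the only real content of the statement is whether $\ad(\alpha,\beta,\gamma,\delta)$ equals $1$.

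For the forward direction, suppose the three roots have common tails. Then in particular $r_1\approx r_2$ for any two of them, and Theorem \ref{ad theorem} immediately supplies integers $\alpha,\beta,\gamma,\delta$ with $r_2=(\alpha r_1+\beta)/(\gamma r_1+\delta)$ and $\ad(\alpha,\beta,\gamma,\delta)=1$, which is exactly what is claimed.

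For the converse, suppose $r_1$ and $r_2$ are roots with $r_2=(\alpha r_1+\beta)/(\gamma r_1+\delta)$ and $\ad(\alpha,\beta,\gamma,\delta)=1$. By Theorem \ref{ad theorem}, $r_1\approx r_2$. Let $\phi$ be the $\Q$-automorphism of $\Q(r_1)$ with $\phi(r_1)=r_2$ (it exists since the extension is Galois and the action on the roots is transitive); as in Section \ref{S:Linear}, $\phi$ is then a $3$-cycle and $\phi(r_2)$ is the third root $r_3$. Applying $\phi$ to $r_2=(\alpha r_1+\beta)/(\gamma r_1+\delta)$ and using that $\phi$ fixes the rationals $\alpha,\beta,\gamma,\delta$, we get $r_3=(\alpha r_2+\beta)/(\gamma r_2+\delta)$ with the same $\ad=1$, so Theorem \ref{ad theorem} gives $r_2\approx r_3$. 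Since $\approx$ is an equivalence relation, $r_1$, $r_2$ and $r_3$ are pairwise equivalent, and truncating the three continued fractions past the latest point of agreement exhibits a single common tail for all three roots.

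I do not expect a serious obstacle here: the theorem is essentially a repackaging of Theorem \ref{ad theorem} with the Galois-equivariance remark from Section \ref{S:Linear}. The one point that genuinely uses the degree-$3$ hypothesis is that $\phi$ must be a $\Q$-automorphism of a field containing all three roots, so that the \emph{integer} matrix relating $r_1$ to $r_2$ is literally the same matrix that relates $r_2$ to $r_3$; the only other care needed is bookkeeping of which root plays the role of $r_1$ and the (routine) remark that pairwise common tails among three numbers forces a shared common tail.
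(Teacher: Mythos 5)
Your proposal is correct and follows essentially the same route as the paper: the forward direction is an immediate application of Theorem \ref{ad theorem}, and the converse propagates the relation $r_{2}=(\alpha r_{1}+\beta)/(\gamma r_{1}+\delta)$ to $r_{3}=(\alpha r_{2}+\beta)/(\gamma r_{2}+\delta)$ via the Galois automorphism fixing $\Q$, then invokes Theorem \ref{ad theorem} again and transitivity of $\approx$. Your added remarks (that $\phi$ is genuinely a $3$-cycle and that pairwise equivalence yields a single shared tail) are small clarifications of points the paper leaves implicit.
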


\begin{proof} Let $p(x)$ be a polynomial over $\Q$ with splitting
field of degree $3$. If $p(x)$ factored over $\Q$, its splitting
field would have degree $1$ or $2$, so $p(x)$ is irreducible, and
thus has $3$ distinct roots. If $p(x)$ has complex roots, it must
have a pair of them and one real root $r$. But then $\Q(r)$ has
degree $3$ and does not contain all roots of $p(x)$, a contradiction.
So $p(x)$ has three distinct real roots. If say $r_{2}=(\alpha r_{1}+\beta)/(\gamma r_{1}+\delta)$
where $\ad(\alpha,\beta,\gamma,\delta)=1$, then $r_{1}\approx r_{2}$
by the previous theorem. Applying the Galois automorphism with $\phiop(r_{1})=r_{2}$,
we get $r_{3}=\phiop(r_{2})=(\alpha\phiop(r_{1})+\beta)/(\gamma\phiop(r_{1})+\delta)=(\alpha r_{2}+\beta)/(\gamma r_{2}+\delta)$
where $\ad(\alpha,\beta,\gamma,\delta)=1$, so $r_{2}\approx r_{3}$
as well. \end{proof}

Consider our initial example of the polynomial $x^{3}+6x^{2}+9x+1$.
We found its roots with Mathematica, and used the command ``FindIntegerNullVector''
to produce integers $\alpha$, $\beta$, $\gamma$ and $\delta$ so
that $r_{2}=(\alpha r_{1}+\beta)/(\gamma r_{1}+\delta)$. (This command
uses the PSLQ integer relation algorithm. See \cite{Bailey} for some
interesting examples of what this algorithm can accomplish.) This
gave $\alpha=3$, $\beta=7$, $\gamma=-1$ and $\delta=-2$, which
we chose to have no common factor. Since $\ad(3,7,-1,-2)=1$, the
roots of $x^{3}+6x^{2}+9x+1$ have common tails. (If Mathematica had
numbered the roots differently we may have had $\alpha=-2$, $\beta=-7$,
$\gamma=1$ and $\delta=3$, corresponding to the inverse of the matrix
with entries $3$,$7$,$-1$ and $-2$. Since $\ad(-2,-7,1,3)$ is
also $1$, this makes no difference.)

Extending our methods slightly, let $r$ be some real root of an irreducible
cubic polynomial $p(x)$. Then every irrational element of $\Q(r)$,
can be written as $(\alpha r+\beta)/(\gamma r+\delta)$ in standard
form. We can characterize when two such elements $s=(\alpha r+\beta)/(\gamma r+\delta)$
and $t=(\alpha'r+\beta')/(\gamma'r+\delta')$ have common tails. Although
it is necessary that $\ad(\alpha,\beta,\gamma,\delta)=\ad(\alpha',\beta',\gamma',\delta')$,
it is not sufficient. Let $\epsilon$ be the GCD of $\alpha$ and
$\gamma$, and let $\eta$ be $\ad(\alpha,\beta,\gamma,\delta)/\epsilon$.
There are also congruence conditions modulo $\eta$.

Since the GCD of $\alpha/\epsilon$ and $\gamma/\epsilon$ is $1$,
there are integers $p$ and $q$ with $p(\alpha/\epsilon)+q(\gamma/\epsilon)=1$.
Let $y$ be such that $0\leq y<\eta$ and $y$ is congruent to $p\beta+q\delta$
mod $\eta$. Then working modulo $\eta$, we have $y(\alpha/\epsilon)\equiv(\alpha/\epsilon)(p\beta+q\delta)\equiv(\alpha/\epsilon)p\beta+(\alpha/\epsilon)q\delta+(\gamma/\epsilon)q\beta-(\gamma/\epsilon)q\beta\equiv(p(\alpha/\epsilon)+q(\gamma/\epsilon))\beta+q((\alpha/\epsilon)\delta-(\gamma/\epsilon)\beta)\equiv1\beta+q\eta\equiv\beta$.
Similarly, $y(\gamma/\epsilon)\equiv\delta$.

Applying any of the three basic operations to $(\alpha r+\beta)/(\gamma r+\delta)$
do not change $\epsilon$, which is the GCD of $\alpha$ and $\gamma$,
and $\eta$ = $\ad(\alpha,\beta,\gamma,\delta)/\epsilon$ is also
unchanged. The number $y$ is such that $y(\alpha/\epsilon)\equiv\beta$
and $y(\gamma/\epsilon)\equiv\delta$ modulo $\eta$. It is clear
that these congruences still hold after $n$ and $r$ are applied,
and applying $p_{n}$ gives $\alpha'=\alpha+n\gamma$ and $\beta'=\beta+n\delta$.
Then $y(\alpha'/\epsilon)\equiv y((\alpha/\epsilon)+n(\gamma/\epsilon))\equiv y(\alpha/\epsilon)+ny(\gamma/\epsilon))\equiv\beta+n\delta\equiv\beta'$,
showing that the congruences still hold. Thus basic operations preserve
the values of $\ad(\alpha,\beta,\gamma,\delta)$ and $\GCD(\alpha,\gamma)$,
as well as the fact that the two congruences involving $y$ hold.

Now consider an arbitrary $2\times2$ matrix with entries $\alpha$,
$\beta$, $\gamma$ and $\delta$, and define $\epsilon$, $\eta$
and $y$ as above. As in the proof of Theorem \ref{ad theorem}, doing
basic operations to apply the Euclidean Algorithm reduces the left
column to $\epsilon$ and $0$. Since $\ad(\alpha,\beta,\gamma,\delta)$
is preserved, the lower right entry is $\pm\eta$. Using $n$ if need
be, we make that entry $\eta$. Now we can apply $p_{n}$ with the
proper choice of $n$ so that the upper right entry $z$ satisfies
$0\leq z<\eta$, and none of the other entries changes. Since $y(\epsilon/\epsilon)\equiv z$
mod $\eta$, we have $z=y$. Note that this also implies that $y$
is unique. 
\[
\mbox{The reduction goes }\left(\begin{array}{cc}
\alpha & \beta\\
\gamma & \delta
\end{array}\right)\longrightarrow\left(\begin{array}{cc}
\epsilon & ?\\
0 & ?
\end{array}\right)\longrightarrow\left(\begin{array}{cc}
\epsilon & ?\\
0 & \eta
\end{array}\right)\longrightarrow\left(\begin{array}{cc}
\epsilon & y\\
0 & \eta
\end{array}\right)
\]

\begin{theorem}\label{same tail criterion} Let $r$ be a real root
of an irreducible cubic, and let $s=(\alpha r+\beta)/(\gamma r+\delta)$
and $t=(\alpha'r+\beta')/(\gamma'r+\delta')$ be any two irrational
elements of $\Q(r)$. Then $s$ and $t$ have common tails iff $\ad(\alpha,\beta,\gamma,\delta)=\ad(\alpha',\beta',\gamma',\delta')$,
$\GCD(\alpha,\gamma)=\GCD(\alpha',\gamma')$ and $s$ and $t$ have
the same value of $y$, where $y$ is computed as above. \end{theorem}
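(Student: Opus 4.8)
The plan is to reduce both $s$ and $t$, by chains of basic operations, to one and the same normal form --- exactly the matrix $\left(\begin{smallmatrix}\epsilon & y\\0 & \eta\end{smallmatrix}\right)$ produced by the reduction displayed just before the theorem, where $\epsilon=\GCD(\alpha,\gamma)$, $\eta=\ad(\alpha,\beta,\gamma,\delta)/\epsilon$, and $y$ is as in the statement --- and then to invoke transitivity of $\approx$. Two facts are used repeatedly. First, the dictionary already checked in the paper: performing a basic operation on an element $w=(\alpha''r+\beta'')/(\gamma''r+\delta'')$ of $\Q(r)$ corresponds, on the matrix $\left(\begin{smallmatrix}\alpha'' & \beta''\\\gamma'' & \delta''\end{smallmatrix}\right)$, to adding $n$ times the bottom row to the top row (for $p_{n}$), negating a row (for $n$), or interchanging the rows (for $r$). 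Second, since $r$ has degree $3$ over $\Q$, any two nonsingular integer matrices that induce the same irrational linear fractional transformation of $r$ are rational scalar multiples of one another, so a primitive such matrix is unique up to sign; hence $\ad(\alpha,\beta,\gamma,\delta)$, $\GCD(\alpha,\gamma)$ and $y$ are invariants of the element of $\Q(r)$, not merely of a chosen representation. We take the representations of $s$ and $t$ in standard form, so their matrices are primitive.

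For the forward direction, suppose $s\approx t$. By Theorem \ref{chain of basic ops theorem} there is a chain $s=w_{0},w_{1},\dots,w_{k}=t$ in which each $w_{i+1}$ comes from $w_{i}$ by one basic operation. Starting from the standard matrix $M_{0}=\left(\begin{smallmatrix}\alpha & \beta\\\gamma & \delta\end{smallmatrix}\right)$ of $s$ and applying the corresponding row operations yields matrices $M_{0},\dots,M_{k}$ with $M_{i}$ a matrix of $w_{i}$ in terms of $r$; each $M_{i}$ is again primitive, since a common prime factor of all four entries of $M_{i+1}$ would (undoing the row operation) divide all four entries of $M_{i}$. By the three paragraphs preceding the theorem, each of these row operations leaves $\ad$, the $\GCD$ of the left column, and the value of $y$ unchanged; so $M_{0}$ and $M_{k}$ share all three. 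As $M_{k}$ is a primitive matrix of $t$, it is $\pm\left(\begin{smallmatrix}\alpha' & \beta'\\\gamma' & \delta'\end{smallmatrix}\right)$, and negating every entry (two applications of $n$) alters none of the three quantities; therefore $\ad(\alpha',\beta',\gamma',\delta')=\ad(\alpha,\beta,\gamma,\delta)$, $\GCD(\alpha',\gamma')=\GCD(\alpha,\gamma)$, and the values of $y$ agree.

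For the converse, assume the three equalities, and let $\epsilon$, $\eta$, $y$ be the common values. By precisely the reduction displayed before the theorem --- the Euclidean Algorithm on the left column performed with the row operations for $p_{n}$ and $r$, a possible use of $n$ to make the lower right entry $+\eta$, and a last $p_{n}$ to move the upper right entry into $[0,\eta)$ --- the matrix $\left(\begin{smallmatrix}\alpha & \beta\\\gamma & \delta\end{smallmatrix}\right)$ is carried by a chain of basic operations to $\left(\begin{smallmatrix}\epsilon & y\\0 & \eta\end{smallmatrix}\right)$. Reading this through the dictionary, applying that same chain of basic operations to $s$ produces $(\epsilon r+y)/\eta$, which is irrational since $\epsilon\ne 0$ and $r$ is irrational; hence $s\approx(\epsilon r+y)/\eta$ by Theorem \ref{chain of basic ops theorem}. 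The same argument applied to $\left(\begin{smallmatrix}\alpha' & \beta'\\\gamma' & \delta'\end{smallmatrix}\right)$ --- which reduces to the identical matrix $\left(\begin{smallmatrix}\epsilon & y\\0 & \eta\end{smallmatrix}\right)$, because $\epsilon$, $\eta$ and $y$ are the same --- gives $t\approx(\epsilon r+y)/\eta$. Since $\approx$ is an equivalence relation, $s\approx t$.

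The elementary matrix identities and the Euclidean-Algorithm bookkeeping are routine, and most of them have already been carried out in the paragraphs before the theorem; the converse then follows at once from the normal-form reduction together with transitivity. The one delicate point is the well-definedness remark in the first paragraph --- that $\ad$, the $\GCD$ of the left column, and $y$ depend only on the element of $\Q(r)$ and not on the integer matrix representing it --- since this is what lets the forward direction pass from the matrices $M_{0},M_{k}$ handed over by the chain to the prescribed standard-form data of $s$ and $t$. I expect that matching to be the only genuine obstacle.
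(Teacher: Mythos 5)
Your proof is correct and follows the paper's argument essentially verbatim: the forward direction uses the invariance of $\ad$, $\GCD(\alpha,\gamma)$ and $y$ under chains of basic operations, and the converse reduces both matrices to the common normal form $u=(\epsilon r+y)/\eta$ and invokes transitivity of $\approx$. The only difference is that you make explicit the well-definedness of the three invariants (uniqueness of the primitive matrix up to sign), a point the published proof leaves implicit since the standard form was already established earlier in the paper.
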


\begin{proof} Suppose $s$ and $t$ have common tails. Then Theorem
\ref{chain of basic ops theorem} implies that a sequence of basic
operations takes $s$ to $t$. Since basic operations do not change
the absolute value of the determinant, $\epsilon$, or $y$, these
have the same values for both $s$ and $t$.

Now assume $\ad(\alpha,\beta,\gamma,\delta)=\ad(\alpha',\beta',\gamma',\delta')$,
$\GCD(\alpha,\gamma)=\GCD(\alpha',\gamma')=\epsilon$ and that $s$
and $t$ have the same value of $y$. Letting $u=(\epsilon r+y)/(0r+\eta)$,
we have $s\approx u\approx t$ by Theorem \ref{chain of basic ops theorem}.
\end{proof}

Letting $u=(\epsilon r+y)/(0r+\eta)=(\epsilon/\eta)r+(y/\eta)$ as
in the proof, we observe that $\epsilon/\eta$ can be any positive
rational, and that $y/\eta$ can also be any rational in the interval
$[0,1)$. As a corollary, we have a set of representatives of the
$\approx$ equivalence classes, the set $\{\mu r+\nu\colon\mu,\nu\in\Q,0<\mu,0\leq\nu<1\}$.

We have a nice criterion for when roots of a cubic have common tails,
but it is not much use computationally. It would be better to be able
to tell if the roots have common tails without first having to find
the roots. Our first approach is to use the fact that a sequence of
basic operations permutes the roots of the cubic. For example, consider
the case where $\alpha=1$, $\beta=-1$, $\gamma=1$ and $\delta=0$.
We have 
\[
\left(\begin{array}{cc}
1 & -1\\
1 & 0
\end{array}\right)^{3}=\left(\begin{array}{cc}
-1 & 0\\
0 & -1
\end{array}\right)\mbox{and}\left(\begin{array}{cc}
1 & -1\\
1 & 0
\end{array}\right)=\left(\begin{array}{cc}
-1 & 0\\
0 & 1
\end{array}\right)\left(\begin{array}{cc}
1 & -1\\
0 & 1
\end{array}\right)\left(\begin{array}{cc}
0 & 1\\
1 & 0
\end{array}\right)
\]

The first equation shows that this choice of $\alpha$, $\beta$,
$\gamma$ and $\delta$ may be one that actually occurs, since doing
the associated fractional linear transformation three times would
take $r_{1}$ to $r_{2}$, on to $r_{3}$, and finally back to $r_{1}$,
since the cube of the matrix is a multiple of the identity. The second
equation factors the matrix into elementary matrices corresponding
to the basic operations, with that for $r$ on the right. This shows
that the linear fractional transformation is $n\circ p_{-1}\circ r$.

To find polynomials with this $\alpha$, $\beta$, $\gamma$ and $\delta$,
we consider the effect of $n\circ p_{-1}\circ r$ on the roots. If
$r_{1}$ is a non-zero root of $ax^{3}+bx^{2}+cx+d$, then $ar_{1}^{3}+br_{1}^{2}+cr_{1}+d=0$,
so $a+b(1/r_{1})+c(1/r_{1})^{2}+d(1/r_{1})^{3}$, and $1/r_{1}$ is
a root of $dx^{3}+cx^{2}+bx+a$. That is, reversing the order of the
coefficients gives a polynomial with roots the reciprocals of those
for the original polynomial. It is convenient to identify polynomials
with row vectors of their coefficients, so we have $R\langle a,b,c,d\rangle^{T}=\langle d,c,b,a\rangle^{T}$,
where $R$ is the matrix so that multiplying by it gives the polynomial
with roots reciprocal to the original roots. ($R$ has entries $R_{1,4}=R_{2,3}=R_{3,2}=R_{4,1}=1$,
and the rest of its entries are $0$.) Next note that when $r_{1}$
is a root of $p(x)$, $r_{1}-1$ is a root of $p(x+1)$. Applying
this to $ax^{3}+bx^{2}+cx+d$, we get the polynomial $a(x+1)^{3}+b(x+1)^{2}+c(x+1)+d=a(x^{3}+3x^{2}+3x+1)+b(x^{2}+2x+1)+c(x+1)+d=ax^{3}+(3a+b)x^{2}+(3a+2b+c)x+(a+b+c+d)$.
Representing polynomials as row vectors, we have the matrix $P_{-1}$
where $P_{-1}\langle a,b,c,d\rangle^{T}=\langle a,3a+b,3a+2b+c,a+b+c+d\rangle^{T}$.
Finally, we have that $r_{1}$ is a root of $p(x)$ iff $-r_{1}$
is a root of $p(-x)$. So the basic operation $n$ corresponds to
taking $ax^{3}+bx^{2}+cx+d$ to $a(-x)^{3}+b(-x)^{2}+c(-x)+d=-ax^{3}+bx^{2}-cx+d$,
or equivalently, to $ax^{3}-bx^{2}+cx-d$. This gives us a matrix
$N$ with $N\langle a,b,c,d\rangle^{T}=\langle a,-b,c,-d\rangle^{T}$.

If $p(x)=ax^{3}+bx^{2}+cx+d$ has $\alpha=1$, $\beta=-1$, $\gamma=1$
and $\delta=0$, then applying $NP_{-1}R$ to $\langle a,b,c,d\rangle^{T}$
should give us a vector corresponding to a polynomial with the same
roots as $p(x)$. Since minimal polynomials are unique to within a
constant factor, we have that $\langle a,b,c,d\rangle^{T}$ is an
eigenvector of $NP_{-1}R$. Calculation shows that 
\[
NP_{-1}R=\left(\begin{array}{cccc}
1 & 0 & 0 & 0\\
0 & -1 & 0 & 0\\
0 & 0 & 1 & 0\\
0 & 0 & 0 & -1
\end{array}\right)\left(\begin{array}{cccc}
1 & 0 & 0 & 0\\
3 & 1 & 0 & 0\\
3 & 2 & 1 & 0\\
1 & 1 & 1 & 1
\end{array}\right)\left(\begin{array}{cccc}
0 & 0 & 0 & 1\\
0 & 0 & 1 & 0\\
0 & 1 & 0 & 0\\
1 & 0 & 0 & 0
\end{array}\right)=\left(\begin{array}{cccc}
0 & 0 & 0 & 1\\
0 & 0 & -1 & -3\\
0 & 1 & 2 & 3\\
-1 & -1 & -1 & -1
\end{array}\right)
\]
This matrix has two complex eigenvalues, and a repeated real eigenvalue
of $1$ with eigenspace spanned by $\langle1,-3,0,1\rangle^{T}$ and
$\langle0,-1,1,0\rangle^{T}$. We rewrite the linear combinations
$a\langle1,-3,0,1\rangle+c\langle0,-1,1,0\rangle$ as polynomials,
and get that all irreducible cubics of the form $ax^{3}+(-3a-c)x^{2}+cx+a$
have $\alpha=1$, $\beta=-1$, $\gamma=1$ and $\delta=0$.

A method which works well in practice for determining if one of our
cubic polynomials has roots with common tails can now be loosely stated
as follows. ``Keep transforming the polynomial using $n$, $r$ and
$p_{k}$ for appropriate values of $k$, trying to put it in the form
$ax^{3}+(-3a-c)x^{2}+cx+a$. If you succeed, the original polynomial
has roots with common tails.'' The idea behind this method is that
the transformations $n$, $r$ and $p_{k}$ do not change whether
the roots of a polynomial have common tails, and that the transformations
can usually be strung together in a fashion reminiscent of the Euclidean
Algorithm to produce a monic polynomial with all its coefficients
small in absolute value.

Another perspective on using transformations to simplify polynomials
can be found by looking at what transformations do to the linear fractional
transformation that relates the roots. Suppose we have a cubic polynomial
with splitting field of degree $3$, where $r_{2}=(\alpha r_{1}+\beta)/(\gamma r_{1}+\delta)$
and $\ad(\alpha,\beta,\gamma,\delta)=1$. Then we must have 
\[
\left(\begin{array}{cc}
\alpha & \beta\\
\gamma & \delta
\end{array}\right)^{3}=\pm I\mbox{ , so }\pm\left(\begin{array}{cc}
\delta & -\beta\\
-\gamma & \alpha
\end{array}\right)=\left(\begin{array}{cc}
\alpha & \beta\\
\gamma & \delta
\end{array}\right)^{-1}=\pm\left(\begin{array}{cc}
\alpha & \beta\\
\gamma & \delta
\end{array}\right)^{2}\mbox{ .}
\]
Where we have the positive signs if $\alpha\delta-\beta\gamma=1$,
and the negative signs if $\alpha\delta-\beta\gamma=-1$. Either case
gives the same set of equations $\alpha^{2}+\beta\gamma=\delta$,
$\beta(\alpha+\delta)=-\beta$, $\gamma(\alpha+\delta)=-\gamma$,
and $\delta^{2}+\beta\gamma=\alpha$. If $\alpha+\delta\neq-1$, we
have $\beta=\gamma=0$ which makes $r_{2}=\pm r_{1}$. Thus $\delta=-1-\alpha$,
and both our remaining equations reduce to $\beta\gamma=-(1+\alpha+\alpha^{2})$.

This forces $|\alpha|$ and $|\delta|$ to be almost the same size,
as well as making $|\beta||\gamma|$ approximately the same size as
$|\alpha|^{2}$. Thus reducing the absolute value of one of $\alpha$,
$\beta$, $\gamma$ or $\delta$ essentially reduces the absolute
values of all the others. Applying $n$, $r$ or $p_{k}$ to a polynomial
has the effect of conjugating the linear fractional transformation
relating its roots by that corresponding to the corresponding basic
operation. If $f$ is a linear fractional transformation with $f(x)=(\alpha x+\beta)/(\gamma x+\delta)$,
we have that $n\circ f\circ n^{-1}(x)=-(\alpha(-x)+\beta)/(\gamma(-x)+\delta)=(\alpha x-\beta)/(-\gamma x+\delta)$.
That is, conjugation by $n$ negates $\beta$ and $\gamma$. Similar
calculations show that conjugation by $n$, $r$ and $p_{k}$ takes
\[
\left(\begin{array}{cc}
\alpha & \beta\\
\gamma & \delta
\end{array}\right)\mbox{to}\left(\begin{array}{cc}
\alpha & -\beta\\
-\gamma & \delta
\end{array}\right)\mbox{,}\left(\begin{array}{cc}
\delta & \gamma\\
\beta & \alpha
\end{array}\right)\mbox{and}\left(\begin{array}{cc}
(\alpha+k\gamma) & (\beta-k\alpha+k\delta-k^{2}\gamma)\\
\gamma & (\delta-k\gamma)
\end{array}\right)\mbox{,}
\]
respectively. Thus under our assumptions, one can usually simplify
the linear fractional transformation $(\alpha x+\beta)/(\gamma x+\delta)$
as follows. First, conjugate by $r$ if $|\gamma|>|\beta|$. Then
pick $k$ so that $|\alpha+k\gamma|$ is as small as possible, and
conjugate by $p_{k}$. Now repeat these steps until all absolute values
are as small as can be obtained. This will likely produce the new
values $\alpha'=1$, $\beta'=-1$, $\gamma'=1$ and $\delta'=0$,
possibly after conjugating by $r$ or $n$ as needed.

We can also use symmetric functions of the roots to write $\alpha$,
$\beta$, $\gamma$ and $\delta$ in terms of the coefficients of
the polynomial. Dividing through by the coefficient of $x^{3}$, we
may assume our polynomial is $x^{3}+bx^{2}+cx+d$, where $b$, $c$
and $d$ are rational. We can also write this polynomial in terms
of its roots $r_{1}$, $r_{2}$ and $r_{3}$ as $(x-r_{1})(x-r_{2})(x-r_{3})$
and obtain $r_{1}+r_{2}+r_{3}=-b$, $r_{1}r_{2}+r_{2}r_{3}+r_{3}r_{1}=c$
and $r_{1}r_{2}r_{3}=-d$.

Assume we have $r_{2}=(\alpha r_{1}+\beta)/(\gamma r_{1}+\delta)$,
or equivalently $\gamma r_{1}r_{2}+\delta r_{2}=\alpha r_{1}+\beta$.
Applying the Galois automorphism, we also have the two cyclicly permuted
equations $\gamma r_{2}r_{3}+\delta r_{3}=\alpha r_{2}+\beta$ and
$\gamma r_{3}r_{1}+\delta r_{1}=\alpha r_{3}+\beta$. Adding the three
equations gives $\gamma(r_{1}r_{2}+r_{2}r_{3}+r_{3}r_{1})+\delta(r_{1}+r_{2}+r_{3})=\alpha(r_{1}+r_{2}+r_{3})+3\beta$,
or $b\alpha-3\beta+c\gamma-b\delta=0$.

Next we take $\gamma r_{1}r_{2}+\delta r_{2}=\alpha r_{1}+\beta$,
and multiply it by $r_{3}$ to get $\gamma r_{1}r_{2}r_{3}+\delta r_{2}r_{3}=\alpha r_{1}r_{3}+\beta r_{3}$.
As before, the two cyclic permutations of this equation are also valid.
Adding all three together gives us $\gamma3(r_{1}r_{2}r_{3})+\delta(r_{1}r_{2}+r_{2}r_{3}+r_{3}r_{1})=\alpha(r_{1}r_{2}+r_{2}r_{3}+r_{3}r_{1})+\beta(r_{1}+r_{2}+r_{3})$
or $-c\alpha+b\beta-3d\gamma+c\delta=0$.

Strictly speaking, $\sqrt{(}\Delta)$ is $\pm(r_{1}-r_{2})(r_{2}-r_{3})(r_{3}-r_{1})$.
We may assume that the roots are ordered so that $\sqrt{(}\Delta)$
is $(r_{1}-r_{2})(r_{2}-r_{3})(r_{3}-r_{1})$, and will do so from
now on. We have $(r_{1}-r_{2})(r_{2}-r_{3})(r_{3}-r_{1})=(r_{1}r_{2}^{2}+r_{2}r_{3}^{2}+r_{3}r_{1}^{2})-(r_{1}^{2}r_{2}+r_{2}^{2}r_{3}+r_{3}^{2}r_{1})$,
and will let $\mu$ be $(r_{1}r_{2}^{2}+r_{2}r_{3}^{2}+r_{3}r_{1}^{2})$
and $\nu$ be $(r_{1}^{2}r_{2}+r_{2}^{2}r_{3}+r_{3}^{2}r_{1})$, so
$\sqrt{(}\Delta)=\mu-\nu$. Now we take our equation $\gamma r_{1}r_{2}+\delta r_{2}=\alpha r_{1}+\beta$,
and multiply it by $r_{3}^{2}$ to get $\gamma r_{1}r_{2}r_{3}^{2}+\delta r_{2}r_{3}^{2}=\alpha r_{1}r_{3}^{2}+\beta r_{3}^{2}$.
We also have the two cyclic permutations of this equation, $\gamma r_{2}r_{3}r_{1}^{2}+\delta r_{3}r_{1}^{2}=\alpha r_{2}r_{1}^{2}+\beta r_{1}^{2}$
and $\gamma r_{3}r_{1}r_{2}^{2}+\delta r_{1}r_{2}^{2}=\alpha r_{3}r_{2}^{2}+\beta r_{2}^{2}$.
Adding all three together gives us $\gamma(r_{1}r_{2}r_{3})(r_{1}+r_{2}+r_{3})+\delta(r_{1}r_{2}^{2}+r_{2}r_{3}^{2}+r_{3}r_{1}^{2})=\alpha(r_{1}^{2}r_{2}+r_{2}^{2}r_{3}+r_{3}^{2}r_{1})+\beta(r_{1}^{2}+r_{2}^{2}+r_{3}^{2})$
or $(-d)(-b)\gamma+\mu\delta=\nu\alpha+(r_{1}^{2}+r_{2}^{2}+r_{3}^{2})\beta$.

To simplify $r_{1}^{2}+r_{2}^{2}+r_{3}^{2}$, we calculate $b^{2}=(r_{1}+r_{2}+r_{3})^{2}=(r_{1}^{2}+r_{2}^{2}+r_{3}^{2})+2(r_{1}r_{2}+r_{2}r_{3}+r_{3}r_{1})=(r_{1}^{2}+r_{2}^{2}+r_{3}^{2})+2c$,
which gives $r_{1}^{2}+r_{2}^{2}+r_{3}^{2}=b^{2}-2c$. Substituting
this in our previous equation, we obtain $-\nu\alpha+(2c-b^{2})\beta+bd\gamma+\mu\delta=0$.

It remains to express $\mu$ and $\nu$ in terms of $b$, $c$ and
$d$. We have $\mu+\nu=(r_{1}r_{2}^{2}+r_{2}r_{3}^{2}+r_{3}r_{1}^{2})+(r_{1}^{2}r_{2}+r_{2}^{2}r_{3}+r_{3}^{2}r_{1})=(r_{1}r_{2}+r_{2}r_{3}+r_{3}r_{1})(r_{1}+r_{2}+r_{3})-3r_{1}r_{2}r_{3}=c(-b)-3(-d)=3d-bc$.
This gives us $\mu=1/2[(\mu+\nu)+(\mu-\nu)]=1/2[3d-bc+\sqrt{(}\Delta)]$
and $\nu=1/2[(\mu+\nu)-(\mu-\nu)]=1/2[3d-bc-\sqrt{(}\Delta)]$.

Thus $\alpha$, $\beta$, $\gamma$ and $\delta$ are solutions of
the three equations $b\alpha-3\beta+c\gamma-b\delta=0$, $-c\alpha+b\beta-3d\gamma+c\delta=0$
and $-\nu\alpha+(2c-b^{2})\beta+bd\gamma+\mu\delta=0$. Since $\alpha$,
$\beta$, $\gamma$ and $\delta$ are only determined to within a
constant multiple, we may add a fourth equation of our choice to the
system. Let $s_{1}$, $s_{2}$, $s_{3}$ and $s_{4}$ be chosen so
that adding the equation $s_{1}\alpha+s_{2}\beta+s_{3}\gamma+s_{4}\delta=1$
produces a system that has a unique solution for $\alpha$, $\beta$,
$\gamma$ and $\delta$. So we have the system 
\[
\left(\begin{array}{cccc}
s_{1} & s_{2} & s_{3} & s_{4}\\
b & -3 & c & -b\\
-c & b & -3d & c\\
-\nu & 2c-b^{2} & bd & \mu
\end{array}\right)\left(\begin{array}{c}
\alpha\\
\beta\\
\gamma\\
\delta
\end{array}\right)=\left(\begin{array}{c}
1\\
0\\
0\\
0
\end{array}\right)
\]
We solve this by Cramer's Rule, although we may neglect to divide
by the determinant of the original matrix since we only want our solution
to within a constant multiple. This gives us 
\[
\alpha=\left|\begin{array}{cccc}
1 & s_{2} & s_{3} & s_{4}\\
0 & -3 & c & -b\\
0 & b & -3d & c\\
0 & 2c-b^{2} & bd & \mu
\end{array}\right|=\left|\begin{array}{ccc}
-3 & c & -b\\
b & -3d & c\\
2c-b^{2} & bd & \mu
\end{array}\right|
\]
Continuing in this manner, we obtain 
\[
\beta=-\left|\begin{array}{ccc}
b & c & -b\\
-c & -3d & c\\
-\nu & bd & \mu
\end{array}\right|\mbox{ , }\gamma=\left|\begin{array}{ccc}
b & -3 & -b\\
-c & b & c\\
-\nu & 2c-b^{2} & \mu
\end{array}\right|\mbox{ and }\delta=-\left|\begin{array}{ccc}
b & -3 & c\\
-c & b & -3d\\
-\nu & 2c-b^{2} & bd
\end{array}\right|
\]

%%%%%%%%%%%%%%%%%%%%%%%%%%%%%%%%%%%%%%%%%%%%%%%%%%%%%%%%%%%%%%%%%%

\section{Generalizations}

\label{S:generalizations}

It is natural to ask when roots of polynomials of other degrees can
have common tails. Nothing is lost by restricting our investigation
to irreducible polynomials with two or more real roots.

The question for quadratic polynomials was considered by J. A. Serret
in the 1800's, it appears as a problem in various editions of a textbook
he wrote (\cite{Serret2}, \cite{Serret1}). His solution says that
the two roots have common tails iff a quadratic Diophantine equation
is solvable.

While this is not a very satisfying answer, it may well be the best
possible. The situation is complicated by the fact that for a quadratic
polynomial, elements of the splitting field do not have unique representations
of the form $(\alpha r_{1}+\beta)/(\gamma r_{1}+\delta)$. Of course
Theorem \ref{ad theorem} still applies, and the roots have common
tails if there are $\alpha,\beta,\gamma,\delta$with $r_{2}=(\alpha r_{1}+\beta)/(\gamma r_{1}+\delta)$
and $\ad(\alpha,\beta,\gamma,\delta)=1$.

There are certainly examples of quadratic polynomials $p(x)$ with
roots that do not have common tails, one is $p(x)=14x^{2}+3x-7$.
Its roots have continued fractions $[0;1,\overline{1,1,1,4,2}]$ and
$[-1;5,\overline{1,1,1,2,4}]$. 

To determine whether polynomials of degrees above $3$ have roots
with common tails, we need to consider the Galois groups of the polynomials
and their actions on the roots. For cubic polynomials with splitting
fields of degree $3$, the Galois group is $\Z_{3}$ and its action
cyclically permutes the roots. Going to quartic polynomials with four
real roots and splitting fields of degree $4$, their are two possible
Galois groups. One is $\Z_{4}$, with an action that cyclically permutes
the four roots. The other possibility is the Klein $4$-group, $\Z_{2}\times\Z_{2}$.
Since the action must be transitive on the roots, we have that $\Z_{2}\times\Z_{2}$
must be the permutation group consisting of the identity and the three
permutations $(r_{1}r_{2})(r_{3}r_{4})$, $(r_{1}r_{3})(r_{2}r_{4})$
and $(r_{1}r_{4})(r_{2}r_{3})$.

It is easy to find polynomials of degree $4$ with Galois groups $\Z_{2}\times\Z_{2}$
where all roots have common tails. The simplest possibility is to
let $(r_{1}r_{2})(r_{3}r_{4})$ be the operation of negation, and
to let one of the other permutations be reciprocal. For example, consider
the polynomial $p(x)=x^{4}-4x^{2}+1$. It has four roots, $r_{1}=\sqrt{2+\sqrt{3}}$,
$r_{2}=-\sqrt{2+\sqrt{3}}$, $r_{3}=\sqrt{2-\sqrt{3}}$ and $r_{4}=-\sqrt{2-\sqrt{3}}$.
Here, $(r_{1}r_{2})(r_{3}r_{4})$ is negation. Since $r_{1}r_{3}=\sqrt{(2+\sqrt{3})(2-\sqrt{3})}=1$,
and so on, we also have that $(r_{1}r_{3})(r_{2}r_{4})$ is reciprocal.

On the other hand, no polynomial with Galois group $\Z_{4}$ has roots
with common tails. The problem is that there are no integer $2\times2$
matrices with determinant $\pm1$ that have order $4$ in the multiplicative
group $PGL(2,\Q)$. (We thank Edward Hanson for pointing us to the
literature on this.)

More precisely, we are looking at the possible finite orders in the
composition group of linear fractional transformations. If $f$ is
a linear fractional transformation of order $n$, we have that $M(f^{n})=(M(f))^{n}$
contains $I$, the $2\times2$ identity matrix, and that $I\notin(M(f))^{k}$
for $k<n$. We will modify the argument used in \cite{Hanson}, which
deals with the related problem of finding matrices of given orders
with minimum dimension over $\Q$. Since we are looking for roots
with common tails, we want $f$ where the matrix in standard form
has absolute value of its determinant equal to $1$.

Picking a matrix $A\in M(f)$ with determinant $\pm1$, we have that
$A^{n}$ is $\pm I$, but that $A^{k}$ is not $\pm I$ for $0<k<n$.
Let $m(x)$ be the minimum polynomial of $A$, the unique monic polynomial
of lowest degree in $\Q[x]$ that has $A$ as a root. Since $A$ satisfies
its characteristic polynomial, the degree of $m(x)$ is less than
or equal to the dimension of $A$, which is $2$. Since $A$ satisfies
$x^{2n}-I=0$, we have that $m(x)$ divides $x^{2n}-1$. Since $m(x)$
is irreducible, it must be a factor of $x^{2n}-1$, where these factors
are cyclotomic polynomials. The cyclotomic polynomials of degree $2$
or less are $c_{1}=x-1$, $c_{2}=x+1$, $c_{3}=x^{2}+x+1$, $c_{4}=x^{2}+1$
and $c_{6}=x^{2}-x+1$. Since $m(x)$ has degree $1$ or $2$, we
have five possibilities. If $m(x)$ is $x-1$ or $x+1$, then $f$
has order $1$. If $m(x)=x^{2}+x+1$, then $A^{3}=I$, while $A^{2}=-A-I\neq\pm I$
and $A\neq\pm I$, so $f$ has order $3$. If $m(x)=x^{2}+1$, then
$A^{2}$ is $-1$ and $f$ has order $1$ or $2$. If $m(x)=x^{2}-x+1$,
then $A$ is a root of $(x+1)(x^{2}-x+1)=x^{3}+1$, so $A^{3}=-I$
and $f$ has order $3$ or less. This gives us the following lemma.

\begin{lemma}\label{order lemma} Let $f$ be a linear fractional
transformation over $\Q$ with standard matrix with determinant $\pm1$.
If $f$ has finite order, then $f$ has order $1$, $2$ or $3$.
\end{lemma}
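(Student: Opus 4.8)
The plan is to reduce the lemma to a short classification of the minimal polynomials that a rational $2\times2$ matrix of determinant $\pm1$ and finite multiplicative order (modulo scalars) can have, as already prepared in the discussion above. First I would pick a representative matrix $A\in\M(f)$ with $\det A=\pm1$, which is available because the standard matrix of $f$ has $\ad$ equal to $1$. If $f$ has order $n$, then $f^{n}$ is the identity transformation, so $A^{n}$ is a nonzero scalar multiple of the identity $I$, say $A^{n}=cI$ with $c\in\Q$. Taking determinants gives $(\det A)^{n}=c^{2}\ge0$, and since $\det A=\pm1$ this forces $(\det A)^{n}=1$ and $c=\pm1$, hence $A^{2n}=I$.

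Next I would use that the minimal polynomial $m(x)\in\Q[x]$ of $A$ has degree at most $2$, since $A$ satisfies its characteristic polynomial, and that $m(x)$ divides $x^{2n}-1$. Because $x^{2n}-1$ is separable over $\Q$ and factors into distinct cyclotomic polynomials, $m(x)$ is a product of distinct cyclotomic polynomials, each of degree at most $2$. The cyclotomic polynomials of degree at most $2$ are $c_{1}=x-1$, $c_{2}=x+1$, $c_{3}=x^{2}+x+1$, $c_{4}=x^{2}+1$ and $c_{6}=x^{2}-x+1$, so $m(x)$ is one of these five or the single reducible product $c_{1}c_{2}=x^{2}-1$.

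Then I would run through these six cases, in each finding the least $k$ with $A^{k}$ scalar, which equals the order of $f$. If $m(x)=x\mp1$ then $A=\pm I$ and $f$ has order $1$. If $m(x)=x^{2}-1$ then $A^{2}=I$, and $m$ has degree $2$ so $A$ is not scalar; thus $f$ has order $2$. If $m(x)=x^{2}+1$ then $A^{2}=-I$, again with $A$ non-scalar, so $f$ has order $2$. If $m(x)=x^{2}+x+1$ then $A^{3}=I$, so the order of $f$ divides $3$, and it is not $1$ since $A$ is non-scalar; hence the order is $3$. If $m(x)=x^{2}-x+1$ then $A$ is a root of $(x+1)(x^{2}-x+1)=x^{3}+1$, so $A^{3}=-I$, whence the order again divides $3$ and equals $3$. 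In every case $f$ has order $1$, $2$ or $3$.

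The only real care needed is bookkeeping. One must remember that ``$A^{k}$ is a nonzero scalar matrix'' — not merely ``$A^{k}=I$'' — is what corresponds to $f^{k}$ being the identity transformation, so that landing on $-I$ in the $c_{4}$ and $c_{6}$ cases still closes the argument, and one must not drop the genuinely reducible possibility $m(x)=x^{2}-1$, which the slogan ``$m$ is irreducible'' would wrongly discard. There is no deeper obstacle: once the minimal-polynomial reduction is in place, the finiteness of the list of low-degree cyclotomic polynomials does all the work.
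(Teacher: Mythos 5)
Your proof is correct and follows essentially the same route as the paper: pass to a determinant-$\pm1$ matrix $A\in\M(f)$, observe $A^{2n}=I$, bound the minimal polynomial by degree $2$ and by divisibility into $x^{2n}-1$, and enumerate the low-degree cyclotomic possibilities. In fact your version is slightly more careful than the paper's, which asserts that $m(x)$ is irreducible; as you note, $m(x)=x^{2}-1=c_{1}c_{2}$ is a genuine possibility (e.g.\ for the negation map, with matrix $\mathrm{diag}(1,-1)$), and your inclusion of that reducible case, which still yields order $2$, closes that small gap.
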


We note that the restriction that the determinant have absolute value
$1$ is necessary. In \cite{Dresden}, G. Dresden shows that $f(x)=(x-1)/(x+1)$
has order $4$ and that $g(x)=(2x-1)/(x+1)$ has order $6$ in $PGL(2,\Q)$.

Now suppose that there is an irreducible polynomial $p(x)$ over $\Q$
with Galois group $\Z_{4}$, and that the four distinct real roots
of $p(x)$ have common tails. Letting $f$ be a generator of the Galois
group $\Z_{4}$, we may call the roots $r_{1}$, $r_{2}=f(r_{1})$,
$r_{3}=f(r_{2})$ and $r_{4}=f(r_{3})$. Since $r_{1}$ and $r_{2}$
have common tails, we must have $r_{2}=(\alpha r_{1}+\beta)/(\gamma r_{1}+\delta)$
where $\ad(\alpha,\beta,\gamma,\delta)=1$. Applying $f$ to this
repeatedly, we get $r_{3}=(\alpha r_{2}+\beta)/(\gamma r_{2}+\delta)$,
$r_{4}=(\alpha r_{3}+\beta)/(\gamma r_{3}+\delta)$, and $r_{1}=(\alpha r_{4}+\beta)/(\gamma r_{4}+\delta)$.
Thus the linear fractional transformation $(\alpha z+\beta)/(\gamma z+\delta)$
has order $4$, contradicting Lemma \ref{order lemma}.

Extending this argument, we have the following theorem.

\begin{theorem}\label{2^k3^m theorem} Let $q(x)$ be an irreducible
polynomial of degree $n$ over $\Q$ with $n$ real roots $r_{1},r_{2},\dots r_{n}$.
If all these roots have common tails, $n$ must be of the form $2^{k}3^{m}$
for nonnegative integers $k$ and $m$. \end{theorem}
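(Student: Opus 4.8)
The plan is to mimic the $\Z_4$ argument, using the Galois group $G$ of $q(x)$ acting transitively on the $n$ roots, together with Lemma \ref{order lemma}. Fix a root $r_1$; since all roots have common tails, for each pair $r_i, r_j$ Theorem \ref{ad theorem} gives integers with $r_j = (\alpha r_i + \beta)/(\gamma r_i + \delta)$ and $\ad$ equal to $1$. In particular, the subfield $\Q(r_1)$ is the full splitting field (adjoining $r_1$ adds its tail, which produces every other root), so $|G| = n$ and $G$ acts \emph{regularly} (simply transitively) on the roots. Now I would send $G$ into the composition group of linear fractional transformations: for $\sigma \in G$, write $\sigma(r_1) = (\alpha_\sigma r_1 + \beta_\sigma)/(\gamma_\sigma r_1 + \delta_\sigma)$ with $\ad(\alpha_\sigma,\beta_\sigma,\gamma_\sigma,\delta_\sigma)=1$, and let $f_\sigma$ be the corresponding transformation. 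Applying $\sigma$ repeatedly (and using that $\sigma$ fixes $\Q$) one checks, exactly as in the $\Z_4$ case in the excerpt, that $f_\sigma$ carries $r_i$ to $\sigma(r_i)$ for every $i$; since the $r_i$ are more than two distinct values and linear fractional transformations are determined by three points, $\sigma \mapsto f_\sigma$ is a well-defined homomorphism, and it is injective because $f_\sigma = \mathrm{id}$ forces $\sigma(r_1) = r_1$, hence $\sigma = 1$ by regularity. Moreover each $f_\sigma$ has a standard matrix of determinant $\pm 1$.

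The next step is to extract the order constraint. Because $\sigma\mapsto f_\sigma$ is an injective homomorphism, every element of $G$ has the same order as its image, and each image $f_\sigma$ has finite order (dividing $n$) with standard matrix of determinant $\pm1$; by Lemma \ref{order lemma}, that order is $1$, $2$, or $3$. Hence \emph{every element of $G$ has order $1$, $2$, or $3$}. Now I invoke the structure of finite groups all of whose elements have order dividing $6$ in this restricted sense — concretely, a finite group in which every element has order $1$, $2$, or $3$. The key classical fact is: such a group has order $2^k 3^m$. One clean way to see this is via Cauchy's theorem — if a prime $p \geq 5$ divided $|G|$, then $G$ would contain an element of order $p > 3$, a contradiction — so the only primes dividing $|G| = n$ are $2$ and $3$, i.e. $n = 2^k 3^m$.

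I expect the main obstacle to be the algebraic bookkeeping in the first paragraph: verifying rigorously that $\sigma \mapsto f_\sigma$ is a homomorphism and that $f_\sigma$ tracks $\sigma$ on all roots simultaneously. The subtlety is that a priori the integers $(\alpha_\sigma,\ldots)$ come from Theorem \ref{ad theorem} applied to the single pair $(r_1,\sigma(r_1))$, and one must argue — using that $r_1$ generates a degree-$n$ extension, so its minimal polynomial has degree $n \geq 3$, and hence that two rational linear fractional expressions agreeing at $r_1$ agree as functions — that $f_\sigma$ is canonically attached to $\sigma$ and composes correctly; this is where the irreducibility of $q(x)$ and $n \geq 3$ are genuinely used. (For $n=2$ the statement is trivially of the required form, so the argument need only handle $n \geq 3$.) Once that is in place, the order bound and Cauchy's theorem finish the proof immediately.
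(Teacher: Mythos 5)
Your overall strategy is the same as the paper's: translate Galois automorphisms into linear fractional transformations whose standard matrices have determinant $\pm1$, bound their possible finite orders by Lemma \ref{order lemma}, and use Cauchy's theorem to rule out primes $p\geq 5$ dividing $n$. The paper simply runs the argument in contrapositive form: it assumes some prime $p\geq5$ divides $n$, notes $p$ divides $|G|$ by orbit--stabilizer, takes a single element $g$ of order $p$ from Cauchy's theorem, and derives a linear fractional transformation of order $p$; it never needs $|G|=n$, regularity, or an embedding of all of $G$ into the group of transformations.

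One intermediate step in your first paragraph is false as stated: for nonabelian $G$ it is not true that $f_{\sigma}$ carries $r_{i}$ to $\sigma(r_{i})$ for every $i$. Writing $r_{i}=\tau(r_{1})$ and applying $\tau$ to the identity $\sigma(r_{1})=f_{\sigma}(r_{1})$ (legitimate because $f_{\sigma}$ has rational coefficients) gives $f_{\sigma}(r_{i})=\tau(\sigma(r_{1}))$, whereas $\sigma(r_{i})=\sigma(\tau(r_{1}))$; these differ unless $\sigma$ and $\tau$ commute, and one can check they genuinely differ in the paper's degree-$6$ example with Galois group $S_{3}$. Consequently $\sigma\mapsto f_{\sigma}$ is an anti-homomorphism rather than a homomorphism. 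Fortunately this does not sink the proof: all you actually need is that $f_{\sigma}$ has the same order as $\sigma$ for each single $\sigma$, and that follows by applying $\sigma$ repeatedly to $\sigma(r_{1})=f_{\sigma}(r_{1})$ (an element commutes with itself), giving $\sigma^{k}(r_{1})=f_{\sigma}^{k}(r_{1})$; since $r_{1}$ satisfies no quadratic over $\Q$, a power of $f_{\sigma}$ fixing $r_{1}$ is the identity, and since the stabilizer of $r_{1}$ is trivial, no smaller power is. With that repair, every element of $G$ has order $1$, $2$ or $3$ by Lemma \ref{order lemma}, and Cauchy's theorem finishes the argument exactly as you say.
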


\begin{proof} Suppose $q(x)$ is as above, where $n$ is not of the
form $2^{k}3^{m}$. Then there is a prime $p$ dividing $n$ with
$p\geq5$. Let $G$ be the Galois group of $q(x)$. Since $q(x)$
is irreducible, $G$ acts transitively on $\{r_{1},r_{2},\dots r_{n}\}$.
Then by the Orbit-stabilizer Theorem, $|G|$ is $n$ times the order
of the stabilizer subgroup of any $r_{i}$, so $p$ divides $|G|$.
Cauchy's Theorem now implies that $G$ has an element $g$ of order
$p$. Letting $r_{i}$ and $r_{j}$ be distinct roots with $g(r_{i})=r_{j}$,
we write $r_{j}$ as a linear fractional transformation of $r_{i}$.
The order of this transformation must then be $p$, contradicting
Lemma \ref{order lemma}. \end{proof}

We do not know for which $n$ of the form $2^{k}3^{m}$ there are
irreducible polynomials over $\Q$ with $n$ real roots with common
tails. If m = 0, then the Galois group has $2^{k}$ many elements,
all of which have orders 1 or 2. This implies that the group is abelian,
and thus must be isomorphic to $Z_{2}^{k}$. Such a group would be
generated by $k$ commuting elements of order $2$. We do not believe
this is possible for $k>2$, since we can not find three distinct
matrices with integer entries, $A,B,C$ that meet all the requirements.
(We need $A^{2}=\pm I$, $B^{2}=\pm I$, $C^{2}=\pm I$, $AB=\pm BA$,
$AC=\pm CA$ and $BC=\pm CB$, plus some minor conditions.) Similarly,
we do not believe there are irreducible polynomials of degree $9$where
all the roots have common tails. For the Galois group would need to
be isomorphic to $Z_{3}\times Z_{3}$, and we have not found sufficiently
distinct integer matrices $A$and $B$with $A^{3}=\pm I$ , $B^{3}=\pm I$
, and $AB=\pm BA$. There are however examples with $k$ and $m$
both positive. Consider $n=6$. To avoid a $6$-cycle, the Galois
group $G$ must be isomorphic to $S_{3}$. If $g\in G$ has say $g(r_{1})=r_{2}$
where $r_{2}=(\alpha r_{1}+\beta)/(\gamma r_{1}+\delta)$ for $\alpha,\beta,\gamma,\delta\in\Z)$,
then we must have $g(r_{j})=(\alpha r_{j}+\beta)/(\gamma r_{j}+\delta)$
for all roots $r_{j}$ since $g$ fixes $\Q$. But $r_{j}$ can not
equal $(\alpha r_{j}+\beta)/(\gamma r_{j}+\delta)$, since $r_{j}$
is not the root of a quadratic. Thus every element of $G$ except
the identity must move all six roots.

For ease of notation, we consider $G\cong S_{3}$ to act on $\{1,2,3,4,5,6\}$,
rather than on the set of roots. So what actions are possible? Without
loss of generality, we let one element of order $3$ in $S_{3}$ be
$\rho=(123)(456)$, and let $\sigma$ be an element of order $2$.
We can not have $\sigma(1)=2$, since this would give us $\sigma(r_{1})=r_{2}=\rho(r_{1})$
and have both $\sigma$ and $\rho$ represented by the same linear
fractional transformation on the roots. Considering $\sigma$ and
$\rho^{-1}$, we see that $\sigma(1)$ can also not be $3$. So without
loss of generality, $\sigma$ contains the cycle $(14)$. By similar
arguments, $\sigma$ must take $2$ into $\{5,6\}$, and take $3$
to whichever of $5$ or $6$ is left. But we can not have $\sigma=(14)(25)(36)$,
for then $\sigma$ would commute with $\rho$, which can not happen
in $S_{3}$. So $\sigma=(14)(26)(35)$. Now $\sigma$ and $\rho$
generate $S_{3}$, and determine an action on $\{r_{1},r_{2},r_{3},r_{4},r_{5},r_{6}\}$
that is unique up to renaming the roots.

To implement this action on the roots, we first pick a combination
of fractional linear transformations for $\rho$ and $\sigma$. We
know that $\rho$ must have order $3$, $\sigma$ must have order
$2$, and they must satisfy $\sigma\circ\rho=\rho^{-1}\circ\sigma$.
(These equations technically only need to hold at the $6$ roots,
but those are enough values to insure that the functions are equal.)
We write $\rho$ as $(\alpha x+\beta)/(\gamma x+\delta)$, where $\ad(\alpha,\beta,\gamma,\delta)=1$.
For $\rho$ to have order $3$, we must have $\delta=-1-\alpha$ and
$\beta\gamma=-(1+\alpha+\alpha^{2})$ , as in the discussion following
Theorem $\ref{same tail criterion}$. 

Now we need to find $\sigma$, and try one of the simpler choices
letting $\sigma(x)=1/x$. Then $\sigma\circ\rho=\rho^{-1}\circ\sigma$
becomes $(\gamma x+\delta)/(\alpha x+\beta)=\pm(\delta/x-\beta)/(-\gamma/x+\alpha)=\pm(-\beta x+\delta)/(\alpha x-\gamma)$,
which yields $\gamma=\mp\beta$, $\delta=\pm\delta$ and $\beta=-\gamma$.
Letting $\alpha\delta-\beta\gamma=-1$ gives $\gamma=\beta$, $\delta=-\delta$
and $\beta=-\gamma$, implying $\beta=\gamma=\delta=0$ which will
not work. So we let $\alpha\delta-\beta\gamma=1$, giving $\gamma=-\beta$,
$\delta=\delta$ and $\beta=-\gamma$. This is compatible with our
other conditions in a few cases; letting $\alpha=0$, $\delta=-1$,
$\beta=-1$ and $\gamma=1$ works.

So we take $\rho(x)=-1/(x-1)$ and $\sigma(x)=1/x$ on the roots.
Now we proceed to hunt for a $6$th degree polynomial $p(x)$ with
coefficients in $\Z$ that allows our $\rho$ and $\sigma$ in its
Galois group. We could use matrices and eigenvectors to find this
as after Theorem $\ref{same tail criterion}$, but will instead deal
with the polynomials directly. The presence of $\sigma$ means that
whenever $r$ is a root of $p(x)$, so is $1/r$. On the other hand,
we can let $p(x)$ be $ax^{6}+bx^{5}+cx^{4}+dx^{3}+ex^{2}+fx+g$,
and observe that for non-zero $x$, $a+b(1/x)+c(1/x)^{2}+d(1/x)^{3}+e(1/x)^{4}+f(1/x)^{5}+g(1/x)^{6}=0$
iff $ax^{6}+bx^{5}+cx^{4}+dx^{3}+ex^{2}+fx+g=0$. That is, $1/r$
is a root of $gx^{6}+fx^{5}+ex^{4}+dx^{3}+cx^{2}+bx+a$ when $r$
is a root of $p(x)$. But $1/r$ is a root of $p(x)$, which is its
minimum polynomial. Thus $gx^{6}+fx^{5}+ex^{4}+dx^{3}+cx^{2}+bx+a$
is a multiple of $p(x)$. Since the greatest common divisors of both
sets of coefficients are equal, the polynomial is $\pm p(x)$. It
works to have it be $p(x)$, so we equate coefficients, and get $a=g$,
$b=f$ and $c=e$.

Similarly, we factor $\rho(x)=-1/(x-1)$ into subtracting $1$ from
$x$, taking the reciprocal, and negating the result. Each step corresponds
to an operation on the polynomial. We have that when $r$ is a root
of $p(x)$, $r-1$ is a root of $p(x+1)=ax^{6}+(6a+b)x^{5}+(15a+5b+c)x^{4}+(20a+10b+4c+d)x^{3}+(15a+10b+6c+3d+e)x^{2}+(6a+5b+4c+3d+2e+f)x+(a+b+c+d+e+f+g)$.
Reciprocals of the roots correspond to reversing the coefficients,
giving the polynomial $(a+b+c+d+e+f+g)x^{6}+(6a+5b+4c+3d+2e+f)x^{5}+(15a+10b+6c+3d+e)x^{4}+(20a+10b+4c+d)x^{3}+(15a+5b+c)x^{2}+(6a+b)x+a$
Negating the roots corresponds to negating coefficients of odd powers
of $x$, so we get that when $r$ is a root of $p(x)$, $\rho(r)$
is a root of the polynomial $(a+b+c+d+e+f+g)x^{6}-(6a+5b+4c+3d+2e+f)x^{5}+(15a+10b+6c+3d+e)x^{4}-(20a+10b+4c+d)x^{3}+(15a+5b+c)x^{2}-(6a+b)x+a$.
But $\rho(r)$ is another root of $p(x)$, which is its minimum polynomial.
So this polynomial is a multiple of $p(x)$. Since $GCD(a+b+c+d+e+f+g,6a+5b+4c+3d+2e+f,15a+10b+6c+3d+e,20a+10b+4c+d,15a+5b+c,6a+b,a)=GCD(a,b,c,d,e,f)$,
the polynomial is $\pm1$ times $p(x)$. It works to have it equal
$p(x)$, so we equate coefficients and get $a+b+c+d+e+f+g=a$, $6a+5b+4c+3d+2e+f=-b$,$15a+10b+6c+3d+e=c$,
$20a+10b+4c+d=-d$, $15a+5b+c=e$, $6a+b=-f$, and $a=g$. Substituting
in $g=a$, $f=b$, $e=c$, and simplifying, the system reduces to
the equations $b=f=-3a$, $d=5a-2c$, $e=c$ and $g=a$, leaving us
free to choose $a$ and $c$. One choice that gives an irreducible
polynomial with six real roots is $a=1$ and $c=-4$, giving the polynomial
$p(x)=x^{6}-3x^{5}-4x^{4}+13x^{3}-4x^{2}-3x+1$. Mathematica confirms
that the roots have common tails and that the Galois group is generated
by our $\rho$ and $\sigma$.

This approach may also work for higher degree polynomials, although
we have not investigated further than the following. To get an irreducible
$12$th-degree polynomial with common tails, there is essentially
only one possibile action of its Galois group on the roots. The Galois
group would be a \textbf{$12$} element group. This group would have
to be isomorphic to $A_{4}$, as the other four $12$ element groups
have elements with order greater than $3$. (See \cite{Coxeter},
for instance.) Since the polynomial is irreducible, the Galois group
would act transitively on the roots. Up to renaming the roots, there
is one possible transitive action of $A_{4}$ on them, which is isomorphic
to the action given by Cayley's Theorem. (Identifying the roots with
the numbers $\ensuremath{1,2,3,\dots12}$, use transitivity and renaming
of roots to get elements $\rho=(123)(456)(789)(10\;11\;12)$ and $\sigma=(14)(28)(3\;12)(5\;11)(69)(7\;10)$
in $A_{4}$. Then \textbf{$\rho$} and \textbf{$\sigma$} generate
$A_{4}$.)

%%%%%%%%%%%%%%%%%%%%%%%%%%%%%%%%%%%%%%%%%%%%%%%%%%%%%%%%%%%%%%%%%%%%%  BIBLIOGRAPHY                                                 %%%%%%%%%%%%%%%%%%%%%%%%%%%%%%%%%%%%%%%%%%%%%%%%%%%%%%%%%%%%%%%%%%%%%

%%%%%%%%%%%%%%%%%%%%%%%%%%%%%%%%%%%%%%%%%%%%%%%%%%%%%%%%%%%%%%
\end{document}